\DeclarePairedDelimiter\ceil{\lceil}{\rceil}
\DeclarePairedDelimiter\floor{\lfloor}{\rfloor}
\author{Tuomas Orponen}
\title[On two theorems of Lutz and Stull]{Combinatorial proofs of \\ two theorems of Lutz and Stull}
\address{Department of Mathematics and Statistics\\ University of Jyv\"askyl\"a,
P.O. Box 35 (Mattilanniemi MaD)\\
FI-40014 University of Jyv\"askyl\"a\\
Finland}
\email{tuomas.t.orponen@jyu.fi}
\date{\today}
\subjclass[2010]{28A80 (primary) 28A78 (secondary)}
\keywords{Projections, Hausdorff and packing dimension}
\thanks{T.O. is supported by the Academy of Finland via the projects \emph{Quantitative rectifiability in Euclidean and non-Euclidean spaces} and \emph{Incidences on Fractals}, grant Nos. 309365, 314172, 321896.}
\newcommand{\R}{\mathbb{R}}
\newcommand{\N}{\mathbb{N}}
\newcommand{\Z}{\mathbb{Z}}
\newcommand{\calD}{\mathcal{D}}
\newcommand{\calH}{\mathcal{H}}
\newcommand{\Hd}{\dim_{\mathrm{H}}}
\newcommand{\Pd}{\dim_{\mathrm{p}}}
\newcommand{\Bd}{\overline{\dim}_{\mathrm{B}}\,}
\newcommand{\spa}{\operatorname{span}}
\newcommand{\diam}{\operatorname{diam}}
\newcommand{\dist}{\operatorname{dist}}
\def\Barint_#1{\mathchoice
          {\mathop{\vrule width 6pt height 3 pt depth -2.5pt
                  \kern -8pt \intop}\nolimits_{#1}}%
          {\mathop{\vrule width 5pt height 3 pt depth -2.6pt
                  \kern -6pt \intop}\nolimits_{#1}}%
          {\mathop{\vrule width 5pt height 3 pt depth -2.6pt
                  \kern -6pt \intop}\nolimits_{#1}}%
          {\mathop{\vrule width 5pt height 3 pt depth -2.6pt
                  \kern -6pt \intop}\nolimits_{#1}}}
\numberwithin{equation}{section}
\theoremstyle{plain}
\newtheorem{thm}[equation]{Theorem}
\newtheorem*{"thm"}{"Theorem"}
\newtheorem{lemma}[equation]{Lemma}
\newtheorem{pigeon}{Pigeon}
\theoremstyle{definition}
\newtheorem{definition}[equation]{Definition}
\theoremstyle{remark}
\newcommand{\nref}[1]{(\hyperref[#1]{#1})}
\DeclareMathSymbol{\intop}  {\mathop}{mathx}{"B3}
\begin{document}

\begin{abstract} Recently, Lutz and Stull used methods from algorithmic information theory to prove two new Marstrand-type projection theorems, concerning subsets of Euclidean space which are not assumed to be Borel, or even analytic. One of the theorems states that if $K \subset \R^{n}$ is any set with equal Hausdorff and packing dimensions, then
\begin{displaymath} \Hd \pi_{e}(K) = \min\{\Hd K,1\} \end{displaymath} 
for almost every $e \in S^{n - 1}$. Here $\pi_{e}$ stands for orthogonal projection to $\spa(e)$.

The primary purpose of this paper is to present proofs for Lutz and Stull's projection theorems which do not refer to information theoretic concepts. Instead, they will rely on combinatorial-geometric arguments, such as discretised versions of Kaufman's "potential theoretic" method, the pigeonhole principle, and a lemma of Katz and Tao. A secondary purpose is to slightly generalise Lutz and Stull's theorems: the versions in this paper apply to orthogonal projections to $m$-planes in $\R^{n}$, for all $0 < m < n$. \end{abstract}

\maketitle

\tableofcontents

\section{Introduction}

This paper contains combinatorial-geometric proofs of two recent projection theorems of Lutz and Stull, namely \cite[Theorems 2 \& 3]{MR3854026}. The original arguments were based on algorithmic information theory, and the intriguing \emph{point-to-set principle}, established previously by Lutz and Lutz \cite{MR3811993}: this principle -- or rather a formula -- expresses the Hausdorff and packing dimensions of an arbitrary set $K \subset \R^{n}$ as the supremum over the (relativized) \emph{pointwise dimensions} of elements $x \in K$. The information theoretic approach is very novel, so it is perhaps natural to ask: can more "conventional" (from the fractal geometers' point of view!) arguments yield the same results? The purpose of this note is to show that they can, at least in the case of the two projection theorems in \cite{MR3854026}. 

I move to the details, and start with some notation: for $0 < m < n$, the notation $G(n,m)$ refers to the \emph{Grassmannian manifold} of $m$-dimensional subspaces of $\R^{n}$, and $\gamma_{n,m}$ is a natural Haar measure on $G(n,m)$, see \cite[\S 3.9]{zbMATH01249699} for more details. Write $\pi_{V} \colon \R^{n} \to V$ for the orthogonal projection to an $m$-plane $V \in G(n,m)$. Hausdorff and packing dimensions will be denoted $\Hd$ and $\Pd$, respectively (see Section \ref{s:prelim} for precise definitions).

For context, I recall the \emph{Marstrand-Mattila projection theorem}:
\begin{thm}[Marstrand-Mattila]\label{mm} Let $0 < m < n$, and let $K \subset \R^{n}$ be an analytic set. Then 
\begin{displaymath} \Hd \pi_{V}(K) = \min\{\Hd K,m\} \qquad \text{for $\gamma_{n,m}$ a.e. } V \in G(n,m). \end{displaymath}
\end{thm}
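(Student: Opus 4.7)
The plan is to handle the trivial upper bound first and then focus energy on the lower bound via the classical potential-theoretic method. For the upper bound, the projection $\pi_V$ is $1$-Lipschitz, so $\Hd \pi_V(K) \leq \Hd K$, and the image lies in the $m$-dimensional space $V$, giving $\Hd \pi_V(K) \leq m$. Both bounds hold for every $V$, not just $\gamma_{n,m}$-almost every $V$, and neither uses analyticity.

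For the lower bound, fix $0 < t < \min\{\Hd K, m\}$ and aim to show $\Hd \pi_V(K) \geq t$ for $\gamma_{n,m}$-a.e.\ $V$; sending $t \nearrow \min\{\Hd K, m\}$ along a countable sequence then finishes the proof. Analyticity of $K$ enters here: it allows Frostman's lemma to be applied to $K$, producing a compactly supported non-zero Borel measure $\mu$ on $K$ with finite $t$-energy,
\begin{displaymath}
I_t(\mu) := \iint |x-y|^{-t} \, d\mu(x)\, d\mu(y) < \infty.
\end{displaymath}
By standard potential theory, in order to conclude $\Hd \pi_V(K) \geq t$ it suffices to show that $I_t(\pi_V \mu) < \infty$, where $\pi_V\mu$ is the push-forward of $\mu$ to $V$.

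The core of the proof is a Fubini computation averaged over the Grassmannian:
\begin{displaymath}
\int_{G(n,m)} I_t(\pi_V \mu)\, d\gamma_{n,m}(V) = \iint \left( \int_{G(n,m)} |\pi_V(x-y)|^{-t}\, d\gamma_{n,m}(V) \right) d\mu(x)\, d\mu(y).
\end{displaymath}
Everything reduces to the rotation-invariant kernel estimate
\begin{displaymath}
\int_{G(n,m)} |\pi_V z|^{-t}\, d\gamma_{n,m}(V) \lesssim_{n,m,t} |z|^{-t}, \qquad z \in \R^{n} \setminus \{0\},
\end{displaymath}
valid precisely in the range $0 < t < m$. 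By the rotational symmetry of $\gamma_{n,m}$ the integral reduces to estimating the probability that a random $m$-plane $V$ makes angle at most $\theta$ with a fixed line, which is comparable to $\theta^{n-m}$; integrating $\theta^{-t}$ against this weight gives convergence exactly when $t < m$.

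With the kernel estimate in hand, the double integral above is bounded by a constant multiple of $I_t(\mu) < \infty$, so $I_t(\pi_V \mu) < \infty$ for $\gamma_{n,m}$-a.e.\ $V$, and the proof is complete. The only genuine obstacle is the Grassmannian kernel estimate; the rest is a bookkeeping exercise combining Frostman's lemma, Fubini, and the potential-theoretic characterisation of Hausdorff dimension.
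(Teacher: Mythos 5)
The paper does not prove Theorem \ref{mm}; it only states it for context, citing Marstrand for $(n,m)=(2,1)$ and Mattila for the general case, so there is no in-paper proof to compare against. Your proposal is the standard potential-theoretic argument and it is essentially correct. It is also worth noting that the paper's Lemmas \ref{lemma2} and \ref{lemma3} are explicitly $\delta$-discretised surrogates for exactly the two ingredients you use: the Grassmannian kernel estimate $\int_{G(n,m)} |\pi_V z|^{-t}\, d\gamma_{n,m}(V) \lesssim |z|^{-t}$ appears in the paper as \eqref{form25} and drives Lemma \ref{lemma2}, while the existence of a Frostman measure is replaced by $(\delta,s)$-sets and counting, since Frostman's lemma is precisely the tool that is unavailable for the non-analytic sets of Theorems \ref{main2}--\ref{main1}. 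In other words, the analyticity hypothesis you invoke is the hypothesis the paper is engineered to avoid.

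One small but genuine imprecision in your write-up: Frostman's lemma at exponent $t$ yields a measure $\mu$ with $\mu(B(x,r)) \lesssim r^{t}$, and such a measure has $I_{s}(\mu) < \infty$ for every $s < t$, but not necessarily $I_{t}(\mu) < \infty$. To literally obtain a measure with finite $t$-energy, first pick $t < t' < \Hd K$ and apply Frostman at exponent $t'$; the resulting $\mu$ then has finite $t$-energy by a dyadic-annulus computation. This is routine bookkeeping and does not affect the structure of your argument, but as stated the sentence ``producing a measure with finite $t$-energy'' skips this step.
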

The case $(n,m) = (2,1)$ is due to Marstrand \cite{MR0063439}, and the general case is due to Mattila \cite{MR0409774}. The analyticity assumption cannot be dropped, at least if the reader believes in the continuum hypothesis: using the continuum hypothesis, Davies \cite[Theorem 1*]{MR567545} constructed a $1$-dimensional set $K \subset \R^{2}$ with zero-dimensional projections to all lines. It would be interesting to know if counterexamples can be constructed without the continuum hypothesis.

In \cite{MR3854026}, Lutz and Stull showed that the analyticity condition can be dropped, however, in somewhat weaker variants of Theorem \ref{mm}:

\begin{thm}\label{main2} Let $0 < m < n$, and let $K \subset \R^{n}$ be a set with $\Hd K = \Pd K$. Then
\begin{displaymath} \Hd \pi_{V}(K) = \min\{\Hd K,m\} \qquad \text{for $\gamma_{n,m}$ a.e. } V \in G(n,m). \end{displaymath}
\end{thm}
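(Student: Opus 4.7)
The upper bound $\Hd \pi_V(K) \leq \min\{\Hd K, m\}$ is immediate from $\pi_V$ being $1$-Lipschitz. I focus on the lower bound. Set $s := \Hd K = \Pd K$, fix $\epsilon > 0$, and aim to prove $\Hd \pi_V(K) \geq \min\{s,m\} - C\epsilon$ for $\gamma_{n,m}$-a.e.~$V$; sending $\epsilon \to 0$ will finish the proof.

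\emph{Reduction via $\Hd K = \Pd K$.} Since $\Pd K = s$, decompose $K = \bigcup_i K_i$ with $\Bd K_i \leq s + \epsilon$. Countable stability of $\Hd$ combined with $\Hd K = s$ yields an index $i$ with $\Hd K_i \geq s - \epsilon$. Since $\pi_V(K_i) \subset \pi_V(K)$, it suffices to prove the target inequality for $K_i$; henceforth assume $K$ is bounded with $\Hd K \geq s - \epsilon$ and $\Bd K \leq s + \epsilon$.

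\emph{Discrete Frostman sets and discretised Kaufman.} At each dyadic scale $\delta_k := 2^{-k}$, the upper box bound gives $N(K, \delta_k) \leq \delta_k^{-(s+\epsilon)}$, while $\Hd K > s - 2\epsilon$ forces $N(K, \delta_k) \gtrsim \delta_k^{-(s-\epsilon)}$ via a Hausdorff content bound. A pigeonhole thinning of the $\delta_k$-cubes meeting $K$ produces a $\delta_k$-separated $P_k \subset K$ with $|P_k| \gtrsim \delta_k^{-(s - O(\epsilon))}$ and local count $|P_k \cap B(x,r)| \lesssim (r/\delta_k)^{s - O(\epsilon)}$ for $r \geq \delta_k$. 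Letting $\mu_k$ be the uniform probability measure on $P_k$, one has $\mu_k(B(x,r)) \lesssim r^{s - O(\epsilon)}$ uniformly in $k$. Kaufman's $L^2$ calculation on $G(n,m)$ then yields, for any $t < \min\{s,m\} - O(\epsilon)$,
\[
\int_{G(n,m)} I_t(\pi_V \mu_k) \, d\gamma_{n,m}(V) \lesssim I_t(\mu_k) \lesssim 1,
\]
uniformly in $k$, whence Markov's inequality and a Borel--Cantelli argument across dyadic scales imply that for $\gamma_{n,m}$-a.e.~$V$ the projected sets $\pi_V(P_k)$ are Frostman-regular at scale $\delta_k$ and exponent $t$ (up to polylogarithmic factors), for all $k$ sufficiently large.

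\emph{Hausdorff-content lower bound and main obstacle.} The last step promotes scale-wise regularity of $\pi_V(P_k) \subset \pi_V(K)$ to $H^t_\infty(\pi_V(K)) > 0$, giving $\Hd \pi_V(K) \geq t$. For any cover $\{B(y_i, r_i)\}$ of $\pi_V(K)$, partition the balls into dyadic scale classes; a pigeonhole selects a dominant scale, at which a lemma of Katz and Tao controls how many $P_k$-points can fall in a single tube $\pi_V^{-1}(B(y_i, r_i))$. A two-way count of $P_k$ against the cover, at a compatibly chosen $k$, should produce $\sum r_i^t \gtrsim 1$ uniformly in the cover. This last step is the crux: for non-analytic $K$ there is no Frostman measure on $K$ to push forward, and weak limits of $\pi_{V,*}\mu_k$ a priori live on $\overline{\pi_V(K)}$ rather than $\pi_V(K)$ itself, so measure-theoretic shortcuts are unavailable. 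The delicate point is that the $\delta_k^{O(\epsilon)}$ slack in the Frostman constant of $P_k$ must cancel cleanly when matched against the multi-scale decomposition of the cover --- exactly the situation made possible by the hypothesis $\Hd K = \Pd K$, which furnishes $P_k$ with nearly matched size and Frostman constant in Step 1.
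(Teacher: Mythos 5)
Your first two steps are sound and track the paper's strategy reasonably closely: extracting a bounded sub-piece of $K$ with matching Hausdorff-content lower bound and box-dimension upper bound (the paper does this via packing dimension and the countable sub-additivity of $\calH^{s}_{\infty}$), then using a discretised Kaufman $L^{2}$ calculation on the Grassmannian (the paper's Lemma \ref{lemma2}). But there is a genuine gap exactly at what you flag as the crux, and it is not a technical detail that "should" work out --- it is the part that requires a different global structure for the argument.

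The obstruction is this. Your plan runs forward: build $P_{k}$, push forward uniform measures, obtain scale-by-scale Frostman regularity of $\pi_{V}(P_{k})$ for a.e.\ $V$, then try to beat an arbitrary cover of $\pi_{V}(K)$ by a two-way count. The problem is that a cover of $\pi_{V}(K)$ is multi-scale and $V$-dependent, while each $P_{k}$ lives at a single scale, and the piece of $K$ that a given small-measure cover actually controls is a \emph{subset} $K_{V}\subset K$ that varies with $V$. Bounding $\int I_{t}(\pi_{V}\mu_{k})\,d\gamma_{n,m}(V)$ and applying Borel--Cantelli gives you control of the full measure $\mu_{k}$, but it does not give you control uniformly over all large sub-collections of $P_{k}$, and without that uniformity you cannot descend from $P_{k}$ to the points of $P_{k}$ that sit near the $V$-dependent bad set $K_{V}$. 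This is not a cosmetic issue: it is precisely why, for non-analytic $K$, the "push forward a Frostman measure" route stalls, as you yourself observe.

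The paper resolves this with two moves you don't make. First, it argues \emph{by contradiction}: assuming $\Hd\pi_{V}(K)<u$ on a positive-measure $G\subset G(n,m)$, each $V\in G$ furnishes a cover of $\pi_{V}(K)$ with $\sum\ell(Q)^{u}\leq 1$; pigeonholing over dyadic scales (once in $\delta$, once in $V$) produces a single scale $\delta=2^{-j}$ and a still-positive-measure $G_{j}\subset G$ such that for every $V\in G_{j}$ a subset $K_{V}\subset K$ with $\calH^{s}_{\infty}(K_{V})\gtrsim j^{-2}$ has $N(\pi_{V}(K_{V}),\delta)\lesssim\delta^{-u}$. This converts the multi-scale, $V$-dependent covering problem into a single-scale, quantified statement. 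Second, and crucially, the discretised projection lemma (Lemma \ref{lemma2}) is formulated so that the exceptional set $G_{\mathrm{bad}}$ depends only on $P$ and works \emph{simultaneously for every} $P'\subset P$ with $|P'|\geq\delta^{-s+\epsilon}$. This uniformity over subsets is exactly what your $L^{2}$-energy/Borel--Cantelli formulation does not deliver, and it is what lets the paper plug in the $V$-dependent $P_{V}\subset P$ coming from $K_{V}$. Combined with Lemma \ref{lemma3} (the good/bad decomposition of $K$, which you replace by a vague "pigeonhole thinning" and which is needed to get both the upper Frostman bound and a lower cardinality bound for the relevant subset), these two moves close the argument. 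Your proposal identifies the right ingredients but leaves open precisely the step that the contradiction-plus-pigeonhole structure is designed to handle.
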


\begin{thm}\label{main1} Let $0 < m < n$, and let $K \subset \R^{n}$. Then 
\begin{displaymath} \Pd \pi_{V}(K) \geq \min\{\Hd K,m\} \qquad \text{for $\gamma_{n,m}$ a.e. } V \in G(n,m). \end{displaymath} 
\end{thm}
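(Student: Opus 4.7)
The plan is to deduce the packing dimension lower bound from a local box-counting estimate via a Baire category argument, and to prove the local estimate by a discretised single-scale Kaufman-type projection theorem applied to $\delta$-separated Frostman subsets of $K$ (since $K$ need not carry a genuine Frostman measure).

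Fix a rational $s < \min\{\Hd K, m\}$, WLOG $K$ bounded with $\cH^s_\infty(K) > 0$. I would first reduce to the ``density core'' $K^\ast := \{x \in K : \cH^s_\infty(K \cap B(x, \rho)) > 0 \text{ for all } \rho > 0\}$. Every $x \in K \setminus K^\ast$ lies in some rational ball $B(q, \rho)$ with $\cH^s_\infty(K \cap B(q, \rho)) = 0$, so countable subadditivity of $\cH^s_\infty$ yields $\cH^s_\infty(K \setminus K^\ast) = 0$, whence $\cH^s_\infty(K^\ast \cap B(x, \rho)) > 0$ for every $x \in K^\ast$ and every $\rho > 0$. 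Since $\Pd \pi_V K \geq \Pd \pi_V K^\ast$, I now work with $K^\ast$.

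Let $\calU$ be the countable family of rational balls $U = B(q, \rho) \subset \R^n$ with $\cH^s_\infty(K^\ast \cap U) > 0$. The central single-scale claim is: for each $U \in \calU$ and rational $\eta > 0$, $\Bdim \pi_V(K^\ast \cap U) \geq s - \eta$ for $\gamma_{n, m}$-a.e.\ $V$. At each dyadic scale $\delta = 2^{-k}$, a Katz-Tao-type pigeonhole argument applied to the $\cH^s_\infty$-content of $K^\ast \cap U$ should produce a $\delta$-separated set $P_\delta \subset K^\ast \cap U$ with $|P_\delta| \gtrsim \delta^{-s}$ and the Frostman non-concentration bound $|P_\delta \cap B(x, R)| \lesssim (R/\delta)^s$ for $R \geq \delta$. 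Combined with the standard estimate $\gamma_{n, m}\{V : |\pi_V v| \leq \delta\} \lesssim (\delta/|v|)^m$ (valid since $s < m$), one obtains
\begin{displaymath}
\int_{G(n, m)} \#\{(x, y) \in P_\delta^2 : |\pi_V(x - y)| \leq \delta\} \, d\gamma_{n, m}(V) \lesssim |P_\delta|.
\end{displaymath}
Chebyshev produces exceptional sets of $\gamma_{n, m}$-measure $\lesssim \delta^\eta$, which is summable over dyadic $\delta$; Borel-Cantelli plus the Cauchy-Schwarz lower bound $N(\pi_V P_\delta, \delta) \geq |P_\delta|^2 / \#\{(x, y) : |\pi_V(x - y)| \leq \delta\}$ then yields $N(\pi_V(K^\ast \cap U), \delta_k) \gtrsim \delta_k^{-(s - \eta)}$ for a.e.\ $V$ and all large $k$, i.e.\ $\Bdim \pi_V(K^\ast \cap U) \geq s - \eta$. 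A countable union over $(U, \eta)$ completes the claim.

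Finally, I conclude via Baire category. For a generic $V$ as above and any open ball $B' \subset V$ meeting $\pi_V K^\ast$, choose $x_0 \in K^\ast$ with $\pi_V x_0 \in B'$ and rational $(q, \rho)$ with $x_0 \in B(q, \rho)$ and $\pi_V B(q, \rho) \subset B'$; the density property forces $B(q, \rho) \in \calU$, so $\Bdim(\pi_V K^\ast \cap B') \geq \Bdim \pi_V(K^\ast \cap B(q, \rho)) \geq s$. The Baire Category Theorem on the compact set $\overline{\pi_V K^\ast}$ now gives $\Pd \overline{\pi_V K^\ast} \geq s$: for any cover $\overline{\pi_V K^\ast} \subset \bigcup_i F_i$ with $F_i$ closed, some $F_i$ must contain $B' \cap \overline{\pi_V K^\ast}$ for an open $B'$ meeting $\pi_V K^\ast$ (using density of $\pi_V K^\ast$ in its closure), so $\Bdim F_i \geq s$. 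Thus $\Pd \pi_V K \geq \Pd \overline{\pi_V K^\ast} \geq s$, and letting $s$ increase to $\min\{\Hd K, m\}$ along rationals finishes the proof. The main obstacle is the single-scale estimate: the classical Kaufman-Mattila argument uses a Frostman measure on $K$, which is unavailable in the non-Borel setting, and must be replaced by a Katz-Tao-style discretised Frostman extraction from $\cH^s_\infty$-content at each individual scale, patched across scales by Borel-Cantelli.
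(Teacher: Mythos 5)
The first two-thirds of your argument — the reduction to the density core $K^{\ast}$, the extraction of $\delta$-separated Frostman subsets $P_{\delta}\subset K^{\ast}\cap U$ at each scale via Hausdorff content, the Marstrand/Kaufman energy estimate, Chebyshev, and Borel--Cantelli — are sound, and the single-scale conclusion $\Bdim\,\pi_V(K^{\ast}\cap U)\geq s-\eta$ for all rational balls $U$ simultaneously, for $\gamma_{n,m}$-a.e.\ $V$, is correct (in fact you get lower box dimension). This is close in spirit to Lemma~\ref{lemma2} of the paper combined with a Borel--Cantelli argument.

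The gap is in the final, Baire-category step. You prove that for a.e.\ $V$, every open ball $B'$ meeting $\pi_V K^{\ast}$ satisfies $\Bdim(\pi_V K^{\ast}\cap B')\geq s$, and you then apply Baire category \emph{in the compact set $\overline{\pi_V K^{\ast}}$} to conclude $\Pd\,\overline{\pi_V K^{\ast}}\geq s$. But packing dimension is not monotone under taking closures: $\Pd A$ can be strictly smaller than $\Pd\overline{A}$ (e.g.\ $A=\Q\cap[0,1]$ has $\Pd A=0$ but $\Pd\overline{A}=1$), so $\Pd\,\overline{\pi_V K^{\ast}}\geq s$ does \emph{not} imply $\Pd\,\pi_V K^{\ast}\geq s$, and your chain $\Pd\,\pi_V K\geq\Pd\,\overline{\pi_V K^{\ast}}$ is false as written — $\pi_V K$ need not contain $\overline{\pi_V K^{\ast}}$. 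The obvious patch — run Baire category in $\pi_V K^{\ast}$ itself — fails because $\pi_V K^{\ast}$ is an arbitrary (non-Borel, non-Polish, not locally compact) subset of $V$, hence need not be a Baire space. Indeed, the fact that ``every ball intersecting $A$ meets $A$ in a set of large box dimension'' does not force $\Pd A$ to be large is exactly the obstruction that makes Theorem~\ref{main1} nontrivial for arbitrary sets. When you instead take a cover $\pi_V K^{\ast}\subset\bigcup_i F_i$ and pigeonhole on Hausdorff content to find a piece $K_V=K^{\ast}\cap\pi_V^{-1}(F_i)$ of positive content, the set $K_V$ depends on $V$, so your single-scale estimate (whose exceptional set depends on the chosen ball $U$, not on such a $V$-dependent $K_V$) cannot be invoked.

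The paper circumvents this by not fixing the Frostman sets inside $K^{\ast}\cap U$ scale-by-scale; instead it fixes, once and for all, a sequence of $(\delta,s'')$-sets $P_{\delta}$ whose $\delta$-neighbourhoods \emph{strongly cover} $K$ (Lemma~\ref{lemma1}, Katz--Tao), so that the Lemma~\ref{lemma2} exceptional set $G_{\mathrm{bad}}^{\delta}$ depends only on $P_{\delta}$, and — crucially — Lemma~\ref{lemma2} is uniform over \emph{all} large subsets $P'\subset P_{\delta}$. One then handles the cover $\{F_{V,i}\}$ of $\pi_V K$ by content sub-additivity (not Baire category), producing a $V$-dependent $K_V\subset K$ of positive content with small $\Bdim\pi_V K_V$; strong covering guarantees $K_V$ is sampled significantly by some $P_{\delta}$ at arbitrarily small scales $\delta$ with $V\notin G_{\mathrm{bad}}^{\delta}$, and the uniformity over subsets $P'$ then gives the contradiction. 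This replacement of a Baire-category density argument by a fixed strong cover plus subset-uniform exceptional sets is the essential missing idea in your proposal.
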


To be precise, only the cases $m = 1$ of Theorems \ref{main2}-\ref{main1} were established in \cite{MR3854026}, and I do not know if the case $m > 1$ would present additional difficulties for the information theoretic approach in \cite{MR3854026}. In the present paper, I will reprove Theorems \ref{main2}-\ref{main1} with combinatorial arguments, which are essentially the same for all $0 < m < n$. These arguments consist of "$\delta$-discretised" versions of the \emph{potential theoretic} method, due Kaufman \cite{MR248779}, and multiple applications of the pigeonhole principle. Some tools are also taken from Katz and Tao's paper \cite{MR1856956}. However, I will repeat details to the extent that this paper is essentially self-contained.

Finally, I mention that Theorem \ref{main1} cannot be improved to 
\begin{displaymath} \dim_{\mathrm{P}} \pi_{V}(K) \geq \min\{\dim_{\mathrm{P}} K,m\} \qquad \textrm{for $\gamma_{n,m}$ a.e. } V \in G(n,m),\end{displaymath}
even if $K \subset \R^{n}$ is compact. Examples of compact sets $K \subset \R^{2}$ with $\dim_{\mathrm{P}} \pi_{L}(K) < \dim_{\mathrm{P}} K \leq 1$ for all $L \in G(2,1)$ were constructed by M. J\"arvenp\"a\"a \cite{MR1316487}. The optimal lower bounds -- for analytic sets -- were established by Falconer and Howroyd \cite{MR1357045}. For a more recent approach, see \cite{2019arXiv190111014F}. As far as I know, the possibility of such lower bounds for arbitrary sets has not been investigated.

\subsection{Notation} An open ball in $\R^{n}$ with centre $x \in \R^{n}$ and radius $r > 0$ will be denoted $B(x,r)$. For $A,B > 0$, the notation $A \lesssim_{p_{1},\ldots,p_{k}} B$ means that there exists a constant $C \geq 1$, depending only on the parameters $p_{1},\ldots,p_{k}$, such that $A \leq CB$. The two-sided inequality $A \lesssim_{p} B \lesssim_{p} A$ is abbreviated to $A \sim_{p} B$, and $A \gtrsim_{p_{1},\ldots,p_{k}} B$ is synonymous to $B \lesssim_{p_{1},\ldots,p_{k}} A$. The notation "$\log$" means logarithm of base $2$.

\subsection{Proof outlines} Theorem \ref{main1} is arguably less surprising than Theorem \ref{main2}. It is already known that the analyticity of $K$ is not required for the $\gamma_{n,m}$ almost sure lower bound $\overline{\dim}_{\mathrm{B}} \pi_{V}(K) \geq \min\{\dim_{\mathrm{H}} K,m\}$, where $\overline{\dim}_{\mathrm{B}}$ stands for upper box dimension. For a short proof in $\R^{2}$ using combinatorial arguments, see \cite[Theorem 4.3]{MR3345668}. One would, then, like to reduce the proof of Theorem \ref{main1} to this known case via the formula
\begin{displaymath} \dim_{\mathrm{P}} \pi_{V}(K) = \inf \big\{ \sup_{i} \overline{\dim}_{\mathrm{B}\,} F_{i} : \pi_{V}(K) \subset \bigcup_{i} F_{i} \big\}. \end{displaymath}
The only issue is that the most obvious reduction uses a Frostman measure supported on $K$, as in \cite[Lemma 4.5]{MR3345668}, and this approach is not available for analytic sets. In the end, however, it turns out Frostman measures can be dispensed with by additional combinatorial arguments.

Let us then discuss Theorem \ref{main2}. Assume for simplicity that $0 \leq s \leq m$ and $\mathcal{H}^{s}(K) > 0$, where $s := \dim_{\mathrm{H}} K = \dim_{\mathrm{P}} K$. Ignoring some technicalities, the equality of Hausdorff and packing dimension implies that for all small $\delta > 0$, the set $K$ can be covered by a family of $\approx \delta^{-s}$ balls of radius $\delta$ whose centres $K_{\delta}$ form a $\delta$-discretised $s$-dimensional set (Definition \ref{CdeltaSet}). By an argument essentially due to Marstrand, the set $K_{\delta}$ has the following property: there is a "tiny" exceptional set $G_{\delta,\mathrm{bad}} \subset G(n,m)$ such that $N(\pi_{V}(K_{V,\delta}),\delta) \approx \delta^{-s}$ for all $V \in G(n,m) \, \setminus \, G_{\delta,\mathrm{bad}}$ and for all subsets $K_{V,\delta} \subset K_{\delta}$ with $|K_{V}| \approx \delta^{-s}$. For a more precise statement, see Lemma \ref{lemma2}.

I can now sketch the proof of Theorem \ref{main2}. Assume that the conclusion fails: there exists a set $G \subset G(n,m)$ with $\gamma_{n,m}(G) > 0$ such that $\dim_{\mathrm{H}} \pi_{V}(K) < \dim_{\mathrm{H}} K$ for all $V \in G$. By a pigeonholing argument, this implies that there exists a scale $\delta > 0$ and "non-tiny" subset $G_{\delta} \subset G$ such that the following holds for all $V \in G_{\delta}$: there is a set $K_{V} \subset K$ with $\mathcal{H}^{s}_{\infty}(K_{V}) > 0$ such that $N(\pi_{V}(K_{V}),\delta) \ll \delta^{-s}$. Since $K_{V} \subset K$ is contained in the $\delta$-neighbourhood of $K_{\delta}$, and $\mathcal{H}^{s}_{\infty}(K_{V}) > 0$, the set $K_{V,\delta} := \{p \in K_{\delta} : \dist(p,K_{V}) \leq \delta\}$ satisfies $|K_{V,\delta}| \approx \delta^{-s}$, and so the projection theorem stated above applies: since $G_{\delta}$ was "non-tiny", there exists a plane $V \in G_{\delta} \, \setminus \, G_{\delta,\mathrm{bad}}$, hence $N(\pi_{V}(K_{V}),\delta) \sim N(\pi_{V}(K_{V,\delta}),\delta) \approx \delta^{-s}$. This contradicts the choice of $K_{V}$ and completes the proof of Theorem \ref{main2}.

\section{Discretising fractals}\label{s:prelim}
Recall that the \emph{Hausdorff dimension} of a set $K \subset \R^{n}$ is the number $\Hd K = \inf \{s \geq 0 : \calH^{s}_{\infty}(K) = 0\}$. Here $\calH^{s}_{\infty}(K)$ is the \emph{$s$-dimensional Hausdorff content}
\begin{displaymath} \calH^{s}_{\infty}(K) = \inf \left\{ \sum_{i} \diam(U_{i})^{s} : K \subset \bigcup_{i} U_{i} \right\}. \end{displaymath}
I next recall \emph{packing dimension}; for more information, see \cite[\S 5.9]{zbMATH01249699} or \cite[\S 3.4]{MR3236784}. 
\begin{definition}[Packing and upper box dimensions]\label{d:pd} The \emph{packing dimension} of a set $K \subset \R^{n}$ is the number
\begin{displaymath} \Pd K = \inf \left\{ \sup_{i} \Bd F_{i} : F_{i} \text{ is bounded and } K \subset \bigcup_{i \in \N} F_{i} \right\}. \end{displaymath} 
Here $\Bd$ is the \emph{upper box dimension}, defined for bounded sets $F \subset \R^{n}$ by
\begin{displaymath} \Bd F := \limsup_{\delta \to 0} \frac{\log N(F,\delta)}{-\log \delta}, \end{displaymath}
where $N(F,\delta)$ stands for the least number of $\delta$-balls required to cover $F$. 
\end{definition}

It may be worth pointing out that packing dimension can be defined in two alternative ways (via upper box dimension, as above, or via \emph{packing measures}), but the two notions coincide for arbitrary sets in $\R^{n}$, see \cite[Theorem 5.11]{zbMATH01249699}. I also remark here that the variant of Theorem \ref{main1} for $\Bd$ (in place of $\Pd$) is remarkably simple; this can be inferred by combining \cite[Proposition 4.10]{MR3345668} and \cite[Proposition A.1]{MR3254928}. 

The next definitions are, to the best of my knowledge, due to Katz and Tao \cite{MR1856956}:

\begin{definition}[$(C,\delta,s)$-sets]\label{CdeltaSet} Let $\delta,s > 0$ and $C \geq 1$. A finite set $P \subset \R^{n}$ is called a \emph{$(C,\delta,s)$-set} if
\begin{displaymath} |P \cap B(x,r)| \leq C\left(\frac{r}{\delta} \right)^{s}, \qquad x \in \R^{n}, \, \delta \leq r \leq 1. \end{displaymath}
\end{definition}
Here $|\cdot|$ refers to cardinality. I will informally talk of "$(\delta,s)$-sets" if the constant $C$ is not important. A good, if imprecise, heuristic is that a $(\delta,s)$-set looks like a $\delta$-net inside a set of Hausdorff dimension $s$. There are various ways of making this more precise. For example, \cite[Proposition A.1]{MR3254928} shows that any set $K \subset \R^{3}$ with $\mathcal{H}^{s}_{\infty}(K) =: \tau > 0$ contains a $(\delta,s)$-set of cardinality $\gtrsim \tau \cdot \delta^{-s}$. A "converse" way to relate Hausdorff dimension and $(\delta,s)$-sets is Lemma \ref{lemma1} below, due to Katz and Tao \cite{MR1856956}, which states that an arbitrary subset of $\R^{n}$ with $\Hd K < s$ can be \emph{strongly covered} by $\delta$-neighbourhoods of $(\delta,s)$-sets.

\begin{definition}[Strong covering] A sequence of sets $E_{1},E_{2},\ldots$ \emph{strongly covers} another set $F$ if every point of $F$ is contained in infinitely many of the sets $E_{i}$. \end{definition} 

The next lemma, and its proof, are virtually the same as \cite[Lemma 7.5]{MR1856956}. I include all the details, because \cite[Lemma 7.5]{MR1856956} is only stated for compact sets, and it also uses the terminology of "hyper-dyadic rationals" which I prefer to avoid here.

\begin{lemma}\label{lemma1} Let $0 < s \leq n$, and let $K \subset \R^{n}$ be a set with $\Hd K < s$. Then there exists a constant $C \geq 1$, depending only on $n,s$, and $\Hd K$ such that the following holds. For every $k \in 2^{-\N}$ there exists a $(Ck^{2},2^{-k},s)$-set $P_{k}$ such that the sequence $\{P_{k}(C_{n}2^{-k})\}_{k \in \N}$ strongly covers $K$. Here $C_{n} \geq 1$ only depend on $n$. \end{lemma}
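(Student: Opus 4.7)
The plan is to mimic the strategy of \cite[Lemma 7.5]{MR1856956}, adapted to use only integer dyadic scales $2^{-k}$ in place of hyper-dyadic rationals. First, I would pick an auxiliary exponent $t$ with $\Hd K < t < s$, which exists since $\Hd K < s$; by definition of Hausdorff dimension one has $\mathcal{H}^{t}(K) = 0$, so for every $k \in \N$ there is a countable cover $\mathcal{B}_k = \{B(x_{k,i}, r_{k,i})\}_{i}$ of $K$ with $r_{k,i} \leq 2^{-k}$ and $\sum_{i} r_{k,i}^{t} \leq \eta_k$, where the parameter $\eta_k > 0$ will be tuned below. After rounding radii down to powers of $2$ and applying a Vitali-type thinning at each dyadic scale (so that the $3$-dilates of the retained balls still cover $K$, absorbing at most a constant into $C_n$), I would further assume that the sub-collection $\mathcal{B}_{k,j} := \{B(x_{k,i},r_{k,i}) \in \mathcal{B}_k : r_{k,i} \in (2^{-j-1}, 2^{-j}]\}$ consists of balls with pairwise $2^{-j}$-separated centres (at most doubling the $t$-sum).

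The candidate set $P_k$ is the union of the centres $\{x_{k,i}\}$ for $B_{k,i}$ at scales $j \in [k, J_k]$ with $J_k = O(k)$ to be chosen; by construction, $K$ is covered by the $(C_n 2^{-k})$-neighbourhood of $P_k$ up to a residue $T_k \subset K$ of points lying only in balls of radius $< 2^{-J_k}$. To verify the $(Ck^2, 2^{-k}, s)$-condition, I would fix $x_0 \in \R^n$ and $r_0 \in [2^{-k}, 1]$ and estimate $|P_k \cap B(x_0, r_0)|$ scale by scale: at scale $j \in [k, J_k]$, the number of centres in $B(x_0, r_0)$ is bounded both by $2^{(j+1)t}\eta_k$ (from the $t$-sum) and by $C_n (r_0 \cdot 2^{j+1})^n$ (from the $2^{-j}$-separation after thinning). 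Taking the minimum at each scale, balancing the two bounds at an appropriate crossover scale $j^{*}(r_0, \eta_k)$ and summing the two resulting geometric tails should produce a total of the desired form $Ck^2 (r_0 \cdot 2^k)^s$; the factor $k^2$ accumulates from the $O(k)$ dyadic scales in $[k,J_k]$ together with a logarithmic loss from the optimisation of $\eta_k$.

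Strong covering would then follow by taking $\eta_k$ summable, so that subadditivity of Hausdorff content gives $\mathcal{H}^{t}_{\infty}(\limsup_k T_k) = 0$, and by choosing the covers $\mathcal{B}_k$ inductively so that each $x \in K$ lies in a non-tiny ball (radius $\geq 2^{-J_k}$) of $\mathcal{B}_k$ at infinitely many $k$. The hard part will be the bookkeeping in the scale-by-scale count: carefully coordinating $\eta_k$, $J_k$ and the two geometric tails of the $\min$ bound so that the sum really is $\leq Ck^2 (r_0 \cdot 2^k)^s$ uniformly in $r_0 \in [2^{-k},1]$, while simultaneously ensuring that the residues $T_k$ can be rotated across different $k$ to avoid any persistent gap. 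Promoting the $\mathcal{H}^{t}_\infty$-vanishing of $\limsup_k T_k$ to a genuine pointwise strong-covering statement is the secondary challenge, and will likely require either a careful inductive choice of the $\mathcal{B}_k$'s or a reduction to a Borel hull of $K$ via Borel-regularity of $\mathcal{H}^{t}$.
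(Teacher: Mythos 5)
Your setup is similar in spirit to the paper's (and to Katz--Tao): for each index produce a fine cover of $K$, sort its pieces by dyadic scale, and let $P_k$ be the centres collected at scale $2^{-k}$. However, the proposal has two genuine gaps, and I do not think either of the fixes you sketch at the end can close the second one.

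First, the scale-by-scale count. You bound the number of centres at scale $j$ inside $B(x_0,r_0)$ by $\min(2^{(j+1)t}\eta_k,\,C_n(r_0 2^{j+1})^n)$. The first quantity is the \emph{global} ball count at scale $j$; it is completely insensitive to $r_0$ and to the location $x_0$. Nothing prevents the cover $\mathcal{B}_k$ from putting all of its scale-$j$ balls inside a single ball $B(x_0,2^{-k})$, in which case the local count really is $\approx 2^{jt}\eta_k \wedge (2^{j-k})^n$. With $\eta_k\approx 2^{-kt}$ this is $\approx 2^{(j-k)t}$, which is an \emph{increasing} geometric series in $j$; summing over $j\in[k,J_k]$ produces $\approx 2^{(J_k-k)t}$, not $\approx k^2$. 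Your plan with $J_k=O(k)$ therefore does not close. You can rescue the count by taking $J_k = k + O(\log k)$, or by shrinking $\eta_k$ drastically (say $\eta_k = 2^{-J_k t}$), but in either regime the part of $K$ covered only by balls of radius below $2^{-J_k}$ -- your residue $T_k$ -- is forced to carry essentially all of $K$. This is not incidental bookkeeping: it is precisely the tension that Katz--Tao and the paper resolve by a structural re-covering step. The paper replaces the scale-$j$ piece $\mathcal{Q}_{i,j}$ of the original cover by a cover $\mathcal{Q}_{i,j}'$ that \emph{minimises} $\sum \ell(Q)^s$ among dyadic covers of the same union, with scales bounded between $2^{-j}$ and $2^{-\epsilon j/s}$. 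Minimality gives, for any dyadic $Q_0$, that $\sum\{\ell(Q)^s : Q\in\mathcal{Q}_{i,j}',\,Q\subset Q_0\}\le \ell(Q_0)^s$, which is exactly the local control your $\min$ cannot deliver; it is what makes each scale-$k$ slice $P_{i,j,k}$ an honest $(C,2^{-k},s)$-set, and $P_k$ a union of only $O(k^2)$ such slices.

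Second, and fatally, the strong covering. You conclude $\mathcal{H}^{t}_\infty(\limsup_k T_k)=0$ from the easy Borel--Cantelli; that is correct, but it does not imply $\limsup_k T_k=\emptyset$. Hausdorff content zero is compatible with the set being nonempty, even uncountable (and for $K$ countable the statement is vacuous). Strong covering is a genuinely pointwise condition -- every $x\in K$ must lie in infinitely many of the sets $P_k(C_n 2^{-k})$ -- and a content bound cannot deliver it. Your two suggested repairs do not help: passing to a Borel hull does not change which points of $K$ need to be hit, and an inductive choice of $\mathcal{B}_k$ cannot remove residues, because the small $t$-sum budget forces tiny balls somewhere and the set of "missed" points varies with $k$ in a way you cannot track for an arbitrary (non-analytic) $K$. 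The paper avoids the residue entirely: for each $i$, the cover at stage $i$ covers \emph{all} of $K$, and the minimising re-covers $\mathcal{Q}_{i,j}'$ again cover all of $X_{i,j}$, at scales pinned to the window $[\epsilon j/s, j]$. Hence every $x\in K$ lies, for every $i$, in some $\mathcal{Q}_{i,j(i),k(i)}'$ with $k(i)\ge \epsilon i/s\to\infty$, which gives the strong covering directly and constructively, with no measure-theoretic exceptional set. Without the minimising re-cover, neither of the two required properties of the lemma comes out of the $t$-sum hypothesis alone.
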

Here, and in the rest of the paper, $A(\delta)$ refers to the $\delta$-neighbourhood of $A \subset \R^{n}$.

\begin{proof}[Proof of Lemma \ref{lemma1}] Fix $\epsilon > 0$ such that $\Hd K < s - \epsilon$. Then, for every $i \in \{1,2,\ldots\}$, find a collection $\mathcal{Q}_{i}$ of disjoint dyadic cubes of side-length \textbf{at most $2^{-i}$} which cover $K$ and satisfy
\begin{equation}\label{form22} \sum_{Q \in \mathcal{Q}_{i}} \ell(Q)^{s - \epsilon} \leq 1. \end{equation}
Above $\ell(Q)$ refers to the side-length of $Q$. For $i \geq j$, write further $\mathcal{Q}_{i,j} := \{Q \in \mathcal{Q}_{i} : \ell(Q) = 2^{-j}\}$, and let $X_{i,j}$ be the union of the cubes in $\mathcal{Q}_{i,j}$. Now, picking the centres of the cubes in $\mathcal{Q}_{i,j}$, we could obtain a $2^{-j}$-separated set $P_{i,j}$ of cardinality $|P_{i,j}| \lesssim 2^{js}$, whose $2^{-j}$-neighbourhood is essentially $X_{i,j}$. Then, a natural first attempt at $P_{j}$, $j \in \N$, would be the union $P_{j} := \cup \{P_{i,j} : 1 \leq i \leq j\}$, with $|P_{j}| \lesssim j \cdot 2^{js}$, whose $C_{n}2^{-j}$-neighbourhood contains
\begin{displaymath} \cup \{X_{i,k} : 1 \leq i \leq k\} =: X_{j}. \end{displaymath}
Note that the sets $X_{j}$ strongly cover $K$. The problem is, however, that $P_{j}$ is not necessarily a $(Cj^{2},2^{-j},s)$-set, so a further refinement is needed.

For $1 \leq i \leq j$ fixed, choose another collection $\mathcal{Q}_{i,j}'$ of disjoint dyadic cubes of side-length \textbf{at least $2^{-j}$} which 
\begin{enumerate}
\item covers $X_{i,j}$, and
\item minimises the sum $\sum \{\ell(Q)^{s} : Q \in \mathcal{Q}_{i,j}'\}$ among all (disjoint) dyadic covers of $X_{i,j}$. 
\end{enumerate}
It is easy to see that a minimiser exists, since cubes of side-length exceeding $2^{-\epsilon j/s}$ need not be considered. Indeed, if $\mathcal{Q}_{i,j}'$ contained a cube of such side-length, then also the sum in (2) would exceed $2^{-\epsilon j}$. However, the collection $\mathcal{Q}_{i,j}$ is a cover for $X_{i,j}$, and satisfies
\begin{displaymath} \sum_{Q \in \mathcal{Q}_{i,j}} \ell(Q)^{s} \leq 2^{-\epsilon j} \sum_{Q \in \mathcal{Q}_{i,j}} \ell(Q)^{s - \epsilon} \stackrel{\eqref{form22}}{\leq} 2^{-\epsilon j}, \end{displaymath}
using that $\ell(Q) \leq 2^{-j}$ for all $Q \in \mathcal{Q}_{i,j}$. So, we know that $\mathcal{Q}_{i,j}'$ is a collection of dyadic cubes of side-lengths between $2^{-j}$ and $2^{-\epsilon j/s}$. Moreover, if $Q_{0} \subset \R^{n}$ is an arbitrary dyadic cube, then
\begin{equation}\label{form21} \sum \left\{ \ell(Q)^{s} : Q \in  \mathcal{Q}_{i,j}' \text{ and } Q \subset Q_{0}\right\} \leq \ell(Q_{0})^{s}, \end{equation}
since otherwise the sum in (2) could be further reduced. 

Next, for $k \in \N$, let
\begin{displaymath} \mathcal{Q}_{i,j,k}' := \{Q \in \mathcal{Q}_{i,j}' : \ell(Q) = 2^{-k}\}, \end{displaymath}
so that $\mathcal{Q}_{i,j}' \subset \cup_{k} \mathcal{Q}_{i,j,k}'$. We record that 
\begin{equation}\label{form23} \mathcal{Q}_{i,j,k}' = \emptyset, \qquad k \notin \{\floor{\epsilon j/s},\ldots,j\}. \end{equation}
Let $P_{i,j,k}$ be the collection of the centres of the cubes in $\mathcal{Q}_{i,j,k}'$. It follows from \eqref{form21} that $P_{i,j,k}$ is a $(C,2^{-k},s)$-set for some $C = C(n) \geq 1$. For $k \in \{1,2,\ldots\}$ fixed, we define
\begin{displaymath} P_{k} := \bigcup_{i = 1}^{\infty} \bigcup_{j \geq i} P_{i,j,k}. \end{displaymath}
We claim that $K$ is strongly covered by the sequence $\{P_{k}(C2^{-k})\}_{k \in \N}$, and that $P_{k}$ is a $(Ck^{2},2^{-k},s)$-set with $C \sim_{n} (s/\epsilon)^{2}$. To see the first property, fix $x \in K$. Then, for every $i \in \N$, $x$ is contained in $X_{i,j(i)} \subset \cup \mathcal{Q}_{i,j(i)}'$ for some $j(i) \geq i$. Consequently, 
\begin{displaymath} x \in \cup \mathcal{Q}_{i,j(i),k(i,j)}' \subset P_{i,j(i),k(i,j)}(C_{n}2^{-k(i,j)}) \end{displaymath}
for some $k(i,j) \geq \epsilon j(i)/s \geq \epsilon i/s$, using also \eqref{form23}. This implies that $x \in P_{k(i,j)}(C_{n}2^{-k(i,j)})$, and since $k(i,j) \to \infty$ as $j \to \infty$, we conclude the strong covering property.

To verify the $(\delta,s)$-set property, note that if $j > \ceil{ks/\epsilon}$ then $k < \floor{\epsilon j/s}$, hence $P_{i,j,k} = \emptyset$ by \eqref{form23}. It follows that we may re-write
\begin{displaymath} P_{k} = \bigcup_{i = 1}^{\ceil{ks/\epsilon}} \bigcup_{j = i}^{\ceil{ks/\epsilon}} P_{i,j,k}. \end{displaymath}
Since each $P_{i,j,k}$ was individually a $(C,2^{-k},s)$-set, it follows that $P_{k}$ is a $(C(ks/\epsilon)^{2},2^{-k},s)$-set, as claimed. \end{proof}

The next lemma concerns the orthogonal projections of $(\delta,s)$-sets. It is a $\delta$-discretised version of the following result of Marstrand \cite{MR0063439}: if $K \subset \R^{n}$ is a compact set with $0 < \calH^{s}(K) < \infty$, then there exists a set $G_{\mathrm{bad}} \subset G(n,m)$ of zero $\gamma_{n,m}$ measure such that the following holds. Whenever $V \in G(n,m) \, \setminus \, G_{\mathrm{bad}}$, and $K' \subset K$ satisfies $\calH^{s}(K') > 0$, then $\Hd \pi_{V}(K') = \min\{m,s\}$. In particular, the "exceptional" set $G_{\mathrm{bad}}$ is independent $K'$.

\begin{lemma}\label{lemma2} Let $0 < m < n$, $0 \leq s \leq n$, and $\epsilon > 0$. Then the following holds for all $0 < \delta < \delta_{0}$, where $\delta_{0}$ depends only on $\epsilon,n,s$ Let $P \subset B(0,1) \subset \R^{n}$ be a $(\delta^{-\epsilon},\delta,s)$-set. Then, there exists a set $G_{\mathrm{bad}} \subset G(n,m)$ with $\gamma_{n,m}(G_{\mathrm{bad}}) \leq \delta^{\epsilon}$, and the following property. If $V \in G(n,m) \, \setminus \, G_{\mathrm{bad}}$, and if $P' \subset P$ with $|P'| \geq \delta^{-s + \epsilon}$, then
\begin{equation}\label{form19} N(\pi_{V}(P'),\delta) \geq \delta^{-\min\{s,m\} + 6\epsilon}. \end{equation} 
\end{lemma}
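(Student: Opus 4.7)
The plan is to run a discretised Kaufman-type second moment argument. For $V \in G(n,m)$ and $A \subset P$, define the coincidence count
\begin{equation*}
E_{V}(A) := |\{(x,y) \in A \times A : |\pi_{V}(x) - \pi_{V}(y)| \leq \delta\}|.
\end{equation*}
A Cauchy--Schwarz argument -- partition $\pi_{V}(P')$ into $\delta$-cubes $\calQ$, let $n_{Q} := |P' \cap \pi_{V}^{-1}(Q)|$, and use $(\sum_{Q} n_{Q})^{2} \leq |\calQ| \sum_{Q} n_{Q}^{2}$ -- yields
\begin{equation*}
N(\pi_{V}(P'),\delta) \gtrsim \frac{|P'|^{2}}{E_{V}(P')} \geq \frac{|P'|^{2}}{E_{V}(P)}, \qquad P' \subset P.
\end{equation*}
The right-hand side depends on $P$ but not on $P'$, so choosing $G_{\mathrm{bad}} := \{V : E_{V}(P) > T\}$ for a threshold $T$ (to be fixed) automatically produces an exceptional set that is uniform in the choice of $P' \subset P$, as required.

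I bound $\gamma_{n,m}(G_{\mathrm{bad}})$ via Markov applied to the first-moment estimate
\begin{equation*}
\int E_{V}(P) \, d\gamma_{n,m}(V) = \sum_{(x,y) \in P \times P} \gamma_{n,m}\big(\{V : |\pi_{V}(x - y)| \leq \delta\}\big).
\end{equation*}
The standard Grassmannian tube bound $\gamma_{n,m}(\{V : |\pi_{V}u| \leq t\}) \lesssim t^{m}$ for $u \in S^{n-1}$ and $t \in (0,1]$ gives each summand $\lesssim (\delta/|x-y|)^{m}$ when $|x - y| \geq \delta$, and trivially $\leq 1$ otherwise. Splitting $P \times P$ into the near-diagonal set $\{|x - y| \leq \delta\}$ and dyadic annuli $\{2^{-k-1} < |x - y| \leq 2^{-k}\}$ for $0 \leq k \leq \lceil\log(1/\delta)\rceil$, and using the $(\delta^{-\epsilon},\delta,s)$-set property $|P \cap B(x, 2^{-k})| \leq \delta^{-\epsilon}(2^{-k}/\delta)^{s}$ to bound the number of pairs in each piece, I obtain
\begin{equation*}
\int E_{V}(P) \, d\gamma_{n,m}(V) \lesssim \delta^{-s - 2\epsilon} + \delta^{-2s - 2\epsilon + m} \sum_{k = 0}^{\lceil\log(1/\delta)\rceil} 2^{-k(s - m)}.
\end{equation*}
The geometric sum is $\lesssim \delta^{s-m}$ when $s \leq m$ (with at worst a $\log(1/\delta)$ factor at $s = m$) and $\lesssim_{s,m} 1$ when $s > m$. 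Absorbing the logarithm into $\delta^{-\epsilon}$, the integral is $\lesssim \delta^{-s - 3\epsilon}$ in the first case and $\lesssim \delta^{-2s + m - 3\epsilon}$ in the second.

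Choosing $T := \delta^{-s - 4\epsilon}$ when $s \leq m$ and $T := \delta^{-2s + m - 4\epsilon}$ when $s > m$, Markov's inequality gives $\gamma_{n,m}(G_{\mathrm{bad}}) \leq \delta^{\epsilon}$ for $\delta$ sufficiently small. For $V \notin G_{\mathrm{bad}}$ and $|P'| \geq \delta^{-s+\epsilon}$, the Cauchy--Schwarz bound above yields $N(\pi_{V}(P'),\delta) \gtrsim \delta^{-2s + 2\epsilon}/T$, which evaluates to $\delta^{-s + 6\epsilon}$ when $s \leq m$ and $\delta^{-m + 6\epsilon}$ when $s > m$, giving exactly \eqref{form19}. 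The only genuine obstacle is the dichotomy between $s \leq m$ and $s > m$, which forces a case split in both the choice of $T$ and the computation of the geometric sum; beyond that, the argument is a routine discrete implementation of Kaufman's potential-theoretic method.
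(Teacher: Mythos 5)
Your proof is correct and takes essentially the same route as the paper: a discretised second-moment (Kaufman/potential-theoretic) argument, using the Grassmannian tube estimate to bound $\int E_V(P)\,d\gamma_{n,m}$, a dyadic annulus decomposition with the $(\delta^{-\epsilon},\delta,s)$-set property to evaluate the resulting sum (with the same $s\le m$ vs. $s>m$ case split and the same thresholds $\delta^{-s-4\epsilon}$, $\delta^{m-2s-4\epsilon}$), Markov's inequality to bound $\gamma_{n,m}(G_{\mathrm{bad}})$, and Cauchy--Schwarz to pass from a bound on $E_V(P)$ to a lower bound on $N(\pi_V(P'),\delta)$ that is uniform over all large subsets $P'\subset P$. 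The only cosmetic difference is that you define $G_{\mathrm{bad}}$ directly by the threshold $E_V(P)>T$, whereas the paper defines it as the set where the conclusion fails and then proves the inclusion into $\{E_V(P)\gtrsim T\}$; these are equivalent.
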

\begin{proof} Let $P \subset \R^{n}$ be a $(\delta^{-\epsilon},\delta,s)$-set, as in the hypothesis. One may easily check that
\begin{equation}\label{form20} \sum_{x \in P} \mathop{\sum_{y \in P}}_{x \neq y} \frac{1}{|x - y|^{m}} \lesssim \begin{cases} \delta^{-2s - 2\epsilon}, & \text{if } m < s \leq n, \\ \delta^{-m - s - 3\epsilon}, & \text{if } 0 \leq s \leq m. \end{cases} \end{equation}
Indeed, just divide the inner summation into dyadic annuli and use the $(\delta^{-\epsilon},\delta,s)$-set condition, and finally also observe that $|P| = |P \cap B(0,1)| \leq \delta^{-\epsilon - s}$. For $V \in G(n,m)$ fixed, define next the quantity
\begin{displaymath} \mathcal{E}_{V}(P) := |\{(x,y) \in P \times P : |\pi_{V}(x) - \pi_{V}(y)| \leq \delta\}|, \end{displaymath} 
and note immediately that
\begin{equation}\label{form26} \mathcal{E}_{V}(P) = |\{(x,y) \in P \times P : x \neq y \text{ and } |\pi_{V}(x) - \pi_{V}(y)| \leq \delta\}| + |P| =: \mathcal{E}'_{V}(P) + |P|. \end{equation}
We recall from \cite[Lemma 3.11]{zbMATH01249699} the following geometric estimate:
\begin{equation}\label{form25} \gamma_{n,m}(\{V \in G(n,m) : |\pi_{V}(x) - \pi_{V}(y)| \leq \delta\}) \lesssim_{n} \frac{\delta^{m}}{|x - y|^{m}}, \qquad x \neq y. \end{equation} 
Combining \eqref{form20},\eqref{form26}, and \eqref{form25},  and noting that $|P| \leq \delta^{-s - \epsilon}$, we find that
\begin{equation}\label{form18} \int_{G(n,m)} \mathcal{E}_{V}(P) \, d\gamma_{n,m}(V) \lesssim_{n} \mathop{\sum_{x,y \in P}}_{x \neq y} \frac{\delta^{m}}{|x - y|^{m}} + |P| \lesssim \begin{cases} \delta^{m - 2s - 2\epsilon}, & \text{if } m < s \leq n, \\ \delta^{-s -3\epsilon}, & \text{if } 0 \leq s \leq m. \end{cases}. \end{equation} 
Now, write $G_{\mathrm{bad}} \subset G(n,m)$ for the set of planes $V \in G(n,m)$ such that \eqref{form19} fails for some set $P' \subset P$ with $|P'| \geq \delta^{-s + \epsilon}$. We claim that 
\begin{displaymath} G_{\mathrm{bad}} \subset \{V \in G(n,m) : \mathcal{E}_{V}(P) \gtrsim \delta^{\min\{s,m\} - 2s - 4\epsilon}\}. \end{displaymath}
Indeed, divide $V \in G_{\mathrm{bad}}$ into dyadic cubes $Q$ with $\diam(Q) \in [\tfrac{\delta}{2},\delta]$, and let $\calD$ be the family of these cubes meeting $\pi_{V}(P')$. Then $|\calD| \lesssim \delta^{-\min\{s,m\} + 6\epsilon}$ by hypothesis. Moreover,
\begin{align*} \mathcal{E}_{V}(P) & \geq \sum_{Q \in \calD} |\{(x,y) \in P' \times P' : \pi_{V}(x),\pi_{V}(y) \in Q\}|\\
& = \sum_{Q \in \calD} |P' \cap \pi_{V}^{-1}(Q)|^{2} \geq \frac{1}{|\calD|} \left( \sum_{Q \in \calD} |P' \cap \pi_{V}^{-1}(Q)| \right)^{2} \gtrsim \delta^{\min\{s,m\} - 2s - 4\epsilon}, \end{align*} 
using Cauchy-Schwarz once. Finally, by Chebyshev's inequality and \eqref{form18}, 
\begin{displaymath} \gamma_{n,m}(G_{\mathrm{bad}}) \leq \gamma_{n,m}(\{V \in G(n,m) : \mathcal{E}_{V}(P) \gtrsim \delta^{\min\{s,m\} - 2s - 4\epsilon}\}) \leq \delta^{\epsilon}, \end{displaymath}
at least for all $\delta > 0$ small enough. This completes the proof. \end{proof}

The final lemma clarifies the connection between a set $K \subset \R^{n}$ with $N(K,\delta) \approx \delta^{-s}$, and $(\delta,s)$-sets: the part of $K$ which is not contained in the $\delta$-neighourhood of a single $(\delta,s)$-set has small $s$-dimensional Hausdorff content. This "bad" part can easily be all of $K$, however: the lemma is only useful if we have an \emph{a priori} lower bound on $\calH^{s}_{\infty}(K)$.

\begin{lemma}\label{lemma3} Let $0 \leq s \leq n$, $\delta > 0$, $C \geq 1$, and let $K \subset \R^{n}$ be a bounded set with
\begin{equation}\label{form9} N(K,\delta) \leq C\delta^{-s}. \end{equation} 
Then, for any $L \geq 1$, there exists a disjoint decomposition $K = K_{\mathrm{good}} \cup K_{\mathrm{bad}}$ such that
\begin{enumerate}
\item $\calH^{s}_{\infty}(K_{\mathrm{bad}}) \lesssim L^{-1}$, and 
\item $K_{\mathrm{good}}$ is contained in the $\delta$-neighbourhood of a $(CL,\delta,s)$-set.
\end{enumerate}
The implicit constant in \textup{(1)} only depends on $n$.
\end{lemma}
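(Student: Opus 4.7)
The plan is a greedy pigeonhole: start from an efficient $\delta$-cover of $K$ and iteratively delete over-concentrated clusters of centres until the remaining set forms a $(CL,\delta,s)$-set, then charge the Hausdorff content of the resulting ``bad'' part against the counts of deletions.

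First I would invoke \eqref{form9} to fix a set $P_0 \subset \R^n$ with $|P_0| \leq C\delta^{-s}$ and $K \subset P_0(\delta)$, and initialise $P := P_0$. The greedy step is: while there exist $x \in \R^n$ and $r \in [\delta,1]$ with $|P \cap B(x,r)| > CL(r/\delta)^s$, pick such a ball $B_i := B(x_i,r_i)$, remove $P \cap B_i$ from $P$, and iterate. Since $P_0$ is finite the process terminates, producing a $(CL,\delta,s)$-set $P_\infty \subset P_0$ together with a finite list of balls $B_i = B(x_i,r_i)$ whose removals exhaust $P_0 \setminus P_\infty$. I would then define
$$K_{\mathrm{good}} := K \cap P_\infty(\delta) \quad \text{and} \quad K_{\mathrm{bad}} := K \setminus K_{\mathrm{good}},$$
so that conclusion (2) holds by construction.

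For conclusion (1), since $K \subset P_0(\delta)$ one has $K_{\mathrm{bad}} \subset (P_0 \setminus P_\infty)(\delta) \subset \bigcup_i B(x_i, 2r_i)$, and therefore
$$\calH^s_\infty(K_{\mathrm{bad}}) \leq \sum_i (4 r_i)^s.$$
By the selection criterion for $B_i$, more than $CL(r_i/\delta)^s$ points of $P_0$ were deleted at step $i$, and no point is deleted twice, so
$$\sum_i CL\,(r_i/\delta)^s \leq |P_0| \leq C\delta^{-s},$$
which rearranges to $\sum_i r_i^s \leq L^{-1}$, yielding the required estimate with implicit constant $4^s \lesssim_n 1$.

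The only mildly delicate point I anticipate is that the $(CL,\delta,s)$-set condition is imposed for the continuous family $r \in [\delta,1]$, not just dyadic values; but because $P_0$ is finite, only finitely many distinct sets $B(x,r) \cap P$ arise as $(x,r)$ varies, so the procedure indeed terminates and the condition really does hold for every $r \in [\delta,1]$ at the end. Beyond this bookkeeping, the argument is an honest pigeonhole and I foresee no substantive obstacle.
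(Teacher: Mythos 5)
Your proof is correct and reaches the conclusion by a construction genuinely different from the paper's. The paper runs a dyadic stopping-time argument: a dyadic cube $Q$ of side $\geq\delta$ is declared \emph{heavy} if it contains at least $\tau CL(\ell(Q)/\delta)^{s}$ of the $\delta$-cubes meeting $K$ (with $\tau=\tau(n)$ a small dimensional constant), $K_{\mathrm{bad}}$ is the set of points of $K$ lying in some heavy cube, and it is covered by the pairwise disjoint \emph{maximal} heavy cubes, whose $\ell(Q)^{s}$-sum telescopes against the total count $\lesssim C\delta^{-s}$ of $\delta$-cubes to give $\calH^{s}_{\infty}(K_{\mathrm{bad}})\lesssim(\tau L)^{-1}$; a maximal $\delta$-separated subset of $K_{\mathrm{good}}=K\setminus K_{\mathrm{bad}}$ is then the required $(CL,\delta,s)$-set once $\tau$ is tuned to absorb the dyadic-to-ball overcounting. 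Your greedy deletion replaces the maximal-cube machinery: you thin the cover $P_{0}$ by striking out the contents of any ball violating the $(CL,\delta,s)$ bound, and the pigeonhole counts deletions against the budget $|P_{0}|\leq C\delta^{-s}$, since the deleted point-sets are disjoint. The trade-off is instructive. You never invoke dyadic cubes, and you need no normalising constant $\tau(n)$: the $(CL,\delta,s)$ condition is enforced verbatim on Euclidean balls, and the cover of $K_{\mathrm{bad}}$ by the inflated balls $\{B(x_{i},2r_{i})\}$ comes for free, costing only $4^{s}\lesssim_{n}1$. What you take on instead is the termination argument (harmless, as $P_{0}$ is finite and each step removes at least one point because $CL\geq 1$) and the small geometric observation that $B(p,\delta)\subset B(x_{i},2r_{i})$ whenever $p\in B(x_{i},r_{i})$ and $r_{i}\geq\delta$, which is what lets you pass from $(P_{0}\setminus P_{\infty})(\delta)$ to the selected balls. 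One cosmetic difference is the order of construction: the paper carves $K_{\mathrm{good}}\subset K$ first and builds a net inside it, while you build $P_{\infty}$ first and define $K_{\mathrm{good}}:=K\cap P_{\infty}(\delta)$; both yield a disjoint decomposition of $K$ with the stated properties. Your route is, if anything, slightly cleaner.
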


\begin{proof} Assume with no loss of generality that $\delta \in 2^{-\Z}$. Let $\calD_{\delta}$ be the collection of dyadic cubes $Q \subset \R^{n}$ of side-length $\ell(Q) = \delta$, and let $\calD_{\geq \delta}$ be the collection of dyadic cubes $Q \subset \R^{n}$ of side-length $\ell(Q) \geq \delta$. Finally, let
\begin{displaymath} \calD_{\delta}(K) := \{Q_{\delta} \in \calD_{\delta} : Q_{\delta} \cap K \neq \emptyset\}, \end{displaymath}
so $|\calD_{\delta}(K)| \lesssim C\delta^{-s}$ by \eqref{form9}. A cube $Q \in \calD_{\geq \delta}$ is called \emph{heavy} if
\begin{displaymath} |\{Q_{\delta} \in \calD_{\delta}(K) : Q_{\delta} \subset Q\}| \geq \tau CL\left(\frac{\ell(Q)}{\delta} \right)^{s}. \end{displaymath} 
Here $\tau = \tau(n) > 0$ is a small constant to be specified later. Note that arbitrarily large cubes cannot be heavy by the upper bound on $|\mathcal{D}_{\delta}(K)|$. Let $K_{\mathrm{bad}} \subset K$ be the set of points in $K$ which are contained in at least one heavy cube. Then $K_{\mathrm{bad}}$ is covered by the maximal heavy cubes in $\calD_{\geq \delta}$, denoted $\mathcal{M}$. The cubes in $\mathcal{M}$ are disjoint, and moreover
\begin{displaymath} \calH^{s}_{\infty}(K_{\mathrm{bad}}) \lesssim \sum_{Q \in \mathcal{M}} \ell(Q)^{s} \leq \frac{\delta^{s}}{\tau CL} \sum_{Q \in \mathcal{M}} |\{Q_{\delta} \in \calD_{\delta}(K) : Q_{\delta} \subset Q\}| \leq \frac{1}{\tau L}. \end{displaymath} 
This verifies condition (1). Define $K_{\mathrm{good}} := K \, \setminus \, K_{\mathrm{bad}}$. By definition, no point in $K_{\mathrm{good}}$ is contained in a heavy cube. In other words, if $Q \in \mathcal{D}_{\geq \delta}$ is arbitrary, then either $Q \cap K_{\mathrm{good}} = \emptyset$, hence $N(K_{\mathrm{good}} \cap Q,\delta) = 0$, or alternatively $Q$ is not a heavy cube. In this case
\begin{displaymath} N(K_{\mathrm{good}} \cap Q,\delta) \leq N(K \cap Q,\delta) \lesssim_{n} |\{Q_{\delta} \in \calD_{\delta}(K) : Q_{\delta} \subset Q\}| < \tau CL\left(\frac{\ell(Q)}{\delta} \right)^{s}. \end{displaymath} 
These estimates imply that if $P \subset K_{\mathrm{good}}$ is a maximal $\delta$-separated subset, then $|P \cap B(x,r)| \lesssim_{n} \tau CL(r/\delta)^{s}$ for all $x \in \R^{n}$ and $r \geq \delta$. Thus $P$ is a $(CL,\delta,s)$-set if $\tau = \tau(n) > 0$ is small enough, and of course $K_{\mathrm{good}} \subset P(\delta)$.\end{proof}

\section{Proofs of the main theorems}

Here is the version of the \emph{pigeonhole principle} that will be frequently employed:
\begin{pigeon}\label{pig} Let $\{a_{1},a_{2},\ldots\}$ be a sequence of non-negative numbers, and write $\sum a_{j} =: A$. Then there exists an index $j \in \N$ such that $a_{j} \gtrsim A/j^{2}$. \end{pigeon}
This principle will be typically employed so that we have a set $K$ in an (outer) measure space $(X,\mu)$ with $\mu(K) > 0$, and a cover $U_{1},U_{2},\ldots$ for $K$. Then, by the sub-additivity of $\mu$, and Pigeon \ref{pig}, we may infer that $\mu(U_{j}) \gtrsim \mu(K)/j^{2}$ for some $j \in \N$.

\subsection{Proof of Theorem \ref{main2}} Write $t := \Hd K = \Pd K$. Recall that the aim is to prove
\begin{displaymath} \Hd \pi_{V}(K) = \min\{t,m\} \qquad \text{for $\gamma_{m,n}$ a.e. } V \in G(n,m). \end{displaymath}
To reach a contradiction, assume with no loss of generality that $t > 0$, and there exists $0 < u < \min\{t,m\}$, and a $\gamma_{n,m}$ positive-measure subset $G \subset G(n,m)$ such that 
\begin{equation}\label{form12} \Hd \pi_{V}(K) < u, \qquad V \in G. \end{equation} 
Pick also $0 < s < t$ so close to $t$ that still $\min\{s,m\} > u$. Then $\mathcal{H}^{s}_{\infty}(K) > 0$. Fix $\epsilon > 0$. By definition of $\dim_{\mathrm{P}} K = t$ and the countable sub-additivity of Hausdorff content, there exists a bounded subset $K_{\epsilon} \subset K$ which satisfies both $\mathcal{H}^{s}_{\infty}(K_{\epsilon}) > 0$ and $\Bd K_{\epsilon} \leq t + \tfrac{\epsilon}{2}$. We replace $K$ by $K_{\epsilon}$ without changing notation. Then, the following holds for all $\delta > 0$ small enough, depending on $\epsilon$ and $K = K_{\epsilon}$:
\begin{equation}\label{form11} N(K,\delta) \le \delta^{-t - \epsilon} = [\delta^{s - t - \epsilon}] \cdot \delta^{-s}. \end{equation}
To fix the parameters, we will eventually need to pick $s < t$ so close to $t$, and $\epsilon > 0$ so small, that
\begin{equation}\label{form17} \min\{s,m\} - 6(t + 2\epsilon - s) > u. \end{equation}
By \eqref{form12}, for every $V \in G$ and $\delta_{0} > 0$, there exists a collection of dyadic cubes $\mathcal{Q}_{V}$ on $V$ of side-lengths $\leq \delta_{0}$ with the properties
\begin{equation}\label{form13} \pi_{V}(K) \subset \bigcup_{Q \in \mathcal{Q}_{V}} Q \quad \text{and} \quad \sum_{Q \in \mathcal{Q}_{V}} \ell(Q)^{u} \leq 1. \end{equation}
We will eventually need to choose $\delta_{0} > 0$ small in a way which depends on $\epsilon,s,t,u$, and $n$. The cubes in $\mathcal{Q}_{V}$ are dyadic, so they can be further partitioned into collections $\mathcal{Q}_{V}(j)$ of (disjoint) cubes of side-length $2^{-j} \leq \delta_{0}$. Write $K_{V}^{j} := \{x \in K : \pi_{V}(x) \in \cup \, \mathcal{Q}_{V}(j)\}$ for the the part of $K$ whose $\pi_{V}$-projection is covered by the intervals in $\mathcal{Q}_{V}^{j}$. In particular,
\begin{equation}\label{form14} N(\pi_{V}(K_{V}^{j}),2^{-j}) \lesssim 2^{ju} \end{equation} 
by \eqref{form13}. Since $\pi_{V}(K) \subset \cup \, \mathcal{Q}_{V}$ for $V \in G$, and $\calH^{s}_{\infty}(K) > 0$, we may use Pigeon \ref{pig} to find a dyadic scale $2^{-j(V)} \leq \delta_{0}$ such that 
\begin{equation}\label{form10} \calH^{s}_{\infty}(K_{V}^{j(V)}) \gtrsim_{K,s} j(V)^{-2}. \end{equation}
The choice of the index $j(V) \in \N$ \emph{a priori} depends on $V \in G$, but we may practically eliminate this dependence by another appeal to Pigeon \ref{pig}. Let $G_{j} := \{V \in G : j(V) = j\}$. Then, the sets $G_{j}$, $2^{-j} \leq \delta_{0}$, cover the $\gamma_{n,m}$ positive-measure set $G$, so there exists a fixed index $j \in \N$ such that $\gamma_{n,m}(G_{j}) \gtrsim j^{-2}$. We then record that \eqref{form14}-\eqref{form10} hold for every $V \in S_{j}$ with $j(V) = j$. To simplify notation, write $K_{V} := K_{V}^{j}$ for $V \in G_{j}$.

Next, we write $\delta := 2^{-j} \leq \delta_{0}$, and apply Lemma \ref{lemma3} with constant $C := \delta^{s - t - \epsilon}$, and level $L := \delta^{-\epsilon}$. By \eqref{form11}, the main hypothesis \eqref{form9} of Lemma \ref{lemma3} is satisfied. The conclusion is that $K = K_{\mathrm{good}} \cup K_{\mathrm{bad}}$ with the properties that
\begin{displaymath} \calH^{s}_{\infty}(K_{\mathrm{bad}}) \lesssim_{n} \delta^{\epsilon}, \end{displaymath} 
and $K_{\mathrm{good}}$ is contained in the $\delta$-neighbourhood of a single $(\delta^{s - t - 2\epsilon},\delta,s)$-set $P \subset \R^{n}$. Since $K_{\mathrm{good}} \subset K$ is bounded, there is no loss of generality assuming that $P \subset B(0,1)$. We write $\epsilon' := t + 2\epsilon - s$, and apply Lemma \ref{lemma2} to the set $P$ and the parameter $\epsilon'$: there exists a subset $G_{\mathrm{bad}} \subset G(n,m)$ with $\gamma_{n,m}(G_{\mathrm{bad}}) \leq \delta^{\epsilon'}$ such that whenever $V \in G(n,m) \, \setminus \, G_{\mathrm{bad}}$ and $P' \subset P$ has cardinality $|P'| \geq \delta^{-s + \epsilon'}$, we have
\begin{equation}\label{form24} N(\pi_{V}(P'),\delta) \geq \delta^{-\min\{s,m\} + 6\epsilon'}. \end{equation}
Fix $V \in G$, and recall from \eqref{form10} that 
\begin{displaymath} \calH^{s}_{\infty}(K_{V}) \gtrsim j^{-2} = \log^{-2} \tfrac{1}{\delta}. \end{displaymath}
Since $K_{V} \subset K \subset K_{\mathrm{good}} \cup K_{\mathrm{bad}}$, and $\calH^{s}_{\infty}(K_{\mathrm{bad}}) \lesssim \delta^{\epsilon} \ll \log^{-2} \tfrac{1}{\delta}$ (take the parameter $\delta_{0} \geq \delta$ so small that this works), we infer from the sub-additivity of $\calH^{s}_{\infty}$ that
\begin{displaymath} \calH^{s}_{\infty}(K_{V} \cap P(\delta)) \geq \calH^{s}_{\infty}(K_{V} \cap K_{\mathrm{good}}) \gtrsim \log^{-2} \tfrac{1}{\delta}, \qquad V \in G. \end{displaymath}
It follows that there exists a set $P_{V} \subset P \cap K_{V}(\delta)$ of cardinality $|P_{V}|\gtrsim \delta^{-s} \log^{-2} \tfrac{1}{\delta}$. In particular, if $\delta > 0$ is small enough (which can be arranged by taking $\delta_{0} \geq \delta$ small enough to begin with), we have $|P_{V}| \geq \delta^{-s + \epsilon'}$. Now, \eqref{form24} implies that 
\begin{equation}\label{form16} N(\pi_{V}(K_{V}),\delta) \gtrsim N(\pi_{V}(P_{V}),\delta) \geq \delta^{-\min\{s,m\} + 6\epsilon'} \end{equation}
for all $V \in G(n,m) \, \setminus \, G_{\mathrm{bad}}$. But $\gamma_{n,m}(G_{\mathrm{bad}}) \leq \delta^{\epsilon'} \ll \log^{-2} \tfrac{1}{\delta} \lesssim \gamma_{n,m}(G_{j})$, so we infer that \eqref{form16} holds for some $V \in G_{j}$. Recalling the choice of $\epsilon' = t + 2\epsilon - s$, and in particular that $\min\{s,m\} - 6\epsilon' > u$ by \eqref{form17}, we find that \eqref{form16} contradicts \eqref{form14} for any $V \in G_{j}$ (again: for $\delta > 0$ small enough). This contradiction completes the proof of Theorem \ref{main2}.

\subsection{Proof of Theorem \ref{main1}} Write $s : = \Hd K \in [0,n]$, and recall that the aim is to prove
\begin{equation}\label{form15} \Pd \pi_{V}(K) \geq \min\{s,m\} \qquad \text{for $\gamma_{n,m}$ a.e. } V \in G(n,m). \end{equation}
If $s = 0$, this is clear, so we may assume that $s > 0$. We may also assume, using the countable stability of $\Hd$, that $K$ is bounded, and then that $K \subset B(0,1)$. Pick $s' < s$, so
\begin{equation}\label{form1} \calH^{s'}_{\infty}(K) > 0. \end{equation}
Pick also $s'' > s$, and write
\begin{equation}\label{form4}  \epsilon := 2(s'' - s') > s'' - s'. \end{equation}
Using Lemma \ref{lemma1}, pick a sequence of $(C_{\delta},\delta,s'')$-sets $\{P_{\delta}\}$, for $\delta \in 2^{-\N}$, such that $K$ is strongly covered the $C_{n}\delta$-neighbourhoods $P_{\delta}(C\delta)$. Here
\begin{displaymath} C_{\delta} \lesssim_{n,s,s''} \log^{2} \tfrac{1}{\delta}. \end{displaymath} 
In particular, $C_{\delta} \leq \delta^{-\epsilon}$ for all $0 < \delta \leq \delta_{0}$, where $\delta_{0} > 0$ only depends on $n,s,s'$, and $s''$. For every $\delta \in 2^{-\N}$ with $\delta < \delta_{0}$, and for $\epsilon = 2(s'' - s')$ as in \eqref{form4}, let $G^{\delta}_{\mathrm{bad}} \subset G(n,m)$ be the exceptional set given by Lemma \ref{lemma2} (associated to $P_{\delta}$) with $\gamma_{n,m}(G^{\delta}_{\mathrm{bad}}) \leq \delta^{\epsilon}$, and such that
\begin{equation}\label{form6} N(\pi_{V}(P'),\delta) \geq \delta^{-\min\{s'',m\} + 6\epsilon}, \qquad V \in G(n,m) \, \setminus \, G^{\delta}_{\mathrm{bad}}, \end{equation} 
whenever $P' \subset P_{\delta}$ satisfies $|P'| \geq \delta^{-s'' + \epsilon}$. By the Borel-Cantelli lemma, the set
\begin{displaymath} G_{\mathrm{bad}} := \{V \in G(n,m) : V \in G_{\mathrm{bad}}^{\delta} \text{ for infinitely many $\delta \in 2^{-\N}$}\} \end{displaymath} 
has $\gamma_{n,m}(G_{\mathrm{bad}}) = 0$. Pick $V \in G(n,m) \, \setminus \, G_{\mathrm{bad}}$. We claim that
\begin{equation}\label{form7} \Pd \pi_{V}(K) \geq \min\{s'',m\} - 6\epsilon, \end{equation} 
which is evidently good enough to prove \eqref{form15} (by letting $s',s'' \to s$, and recalling that $\epsilon = 2(s'' - s')$). If \eqref{form7} fails, then, by definition of $\Pd$, there exists a number $t < \min\{s'',m\}$, and bounded sets $F_{V,1},F_{V,2},\ldots \subset V$ such that
\begin{displaymath} \pi_{V}(K) \subset \bigcup_{i \in \N} F_{V,i}, \end{displaymath}
with the property that
\begin{displaymath} \Bd F_{V,i} < t - 6\epsilon, \qquad i \in \N. \end{displaymath}
In particular, by \eqref{form1} and the sub-additivity of Hausdorff content, there exists a subset $K_{V} \subset K$ with $\calH_{\infty}^{s'}(K_{V}) > 0$, and an index $i \in \N$, with the property that $\pi_{e}(K_{V}) \subset F_{V,i}$. As a consequence,
\begin{equation}\label{form5} \limsup_{\delta \to 0} \frac{\log N(\pi_{V}(K_{V}),\delta)}{-\log \delta} = \Bd \pi_{V}(K_{V}) \leq \Bd \pi_{V}(F_{V,i}) < t - 6\epsilon. \end{equation} 
The sets $P_{\delta}(C_{n}\delta)$ strongly cover $K$, so they also strongly cover $K_{V}$. In other words, every point in $K_{V}$ is contained in infinitely many sets in $P_{\delta}(C_{n}\delta)$. It follows from the "easier" Borel-Cantelli lemma (which only requires the sub-additivity of the Hausdorff content $\calH^{s'}_{\infty}$) that 
\begin{equation}\label{form2}  \mathop{\sum_{\delta \in 2^{-\N}}}_{\delta < \delta_{0}} \calH^{s'}_{\infty}(P_{\delta}(C_{n}\delta) \cap K_{V}) = \infty \end{equation} 
for any $\delta_{0} \in 2^{-\N}$. Eventually, we will need to pick $\delta_{0} > 0$ small in a way depending on $s',s'',t,n$, and the choice of $V$. Now, we may infer from \eqref{form2} that there exists $\delta < \delta_{0}$ such that
\begin{equation}\label{form3} \calH^{s'}_{\infty}(P_{\delta}(C_{n}\delta) \cap K_{V}) \gtrsim \log^{-2} \tfrac{1}{\delta}. \end{equation} 
It is worth mentioning that we cannot arrange for \eqref{form3} to hold for all $\delta > 0$, but we can have it for arbitrarily small $\delta > 0$, which is good enough for our purposes. There will be a few conditions on how small we want to take $\delta > 0$ (hence $\delta_{0}$). The first one is that $\delta > 0$ should be so small that $V \notin G_{\mathrm{bad}}^{\delta}$; by the initial choice $e \notin G_{\mathrm{bad}}$, this is indeed true for all $\delta > 0$ small enough. A second condition is that $\delta$ should be taken so small that
\begin{equation}\label{form8} N(\pi_{V}(K_{V}),\delta) \leq \delta^{-t + 6\epsilon}. \end{equation}
This is also true by \eqref{form5} for all $\delta > 0$ sufficiently small. After these preliminaries and comments, we infer from \eqref{form3} that there exists a set $P_{V} \subset P_{\delta} \cap K_{V}(C_{n}\delta)$ of cardinality 
\begin{displaymath} |P_{V}| \gtrsim \delta^{-s'} \cdot \log^{-2} \tfrac{1}{\delta}. \end{displaymath}
In particular, if $\delta < \delta_{0}$ is small enough, and recalling from \eqref{form4} that $s'' - \epsilon < s'$, we have $|P_{V}| \geq \delta^{-s'' + \epsilon}$. Since $V \notin G_{\mathrm{bad}}^{\delta}$, this means by \eqref{form6} that
\begin{displaymath} N(\pi_{V}(K_{V}),\delta) \gtrsim_{n} N(\pi_{V}(P_{V}),\delta) \geq \delta^{-\min\{s'',m\} + 6\epsilon}. \end{displaymath}   
Recalling that $t < \min\{s'',m\}$, the inequality above contradicts \eqref{form8}, if $\delta > 0$ is small enough, and the proof of \eqref{form7} is complete. As we pointed out after \eqref{form7}, this also concludes the proof of Theorem \ref{main1}.

\bibliographystyle{plain}
\bibliography{references}

\end{document}